\DeclareMathOperator{\diag}{diag}
\def\Chn{\mathbb{C}H^n}
\def\Ch2{\mathbb{C}H^2}
\def\Cg2{{\mathcal {CH}}^2}
\def\Cg{{\mathcal {CH}}^n}
\def\Rhn{\mathbb{R}H^n}
\def\Rh4{\mathbb{R}H^4}
\def\Rg2{{\mathcal {RH}}^4}
\def\chn{ch_n}
\def\Hnn{H ^{2n-1}}
\def\hh{\mathfrak{h} _{2n-1}}
\def\RR{\mathbb{R}}
\def\CC{\mathbb{C}}
\def \tr{\text{tr}}
\def \ad{\text{ad}}
\def \Ric{\text{Ric}}
\def \Der{\text{Der}}
\def \nn {\mathfrak{n}}
\def \aa {\mathfrak{a}}
\def \ss {\mathfrak{s}}
\newtheorem{Theorem}{{Theorem}}[section]
\newtheorem{Lemma}{{Lemma}}[section]
\newtheorem{Corollary}{{Corollary}}[section]
\newtheorem{Remark}{{Remark}}[section]
\newtheorem{Definition}{{Definition}}[section]
\newenvironment{customthm}[1]
  {\innercustomthm}
  {\endinnercustomthm}
\title{Classification of Left Invariant Riemannian metrics on Complex hyperbolic space}
\author
{
Andrijana Deki\'{c}\thanks{Corresponding author: andrijana.savkovic@gmail.com },
Marijana Babi\'{c}, 
Srdjan Vukmirovi\'{c}
}
\date{}
\begin{document}

\maketitle
\unmarkedfntext{2020 \emph{Mathematics Subject Classification}.
53C30, 58D17, 22E60, 22E25.

\indent\phantom{K}\emph{Key words and phrases}: Complex hyperbolic space,  left invariant metric, solvmanifold, Ricci soliton}

\begin{abstract}
It is well known that $\Chn$ has the structure of solvable Lie group with left invariant metric of constant holomorphic sectional curvature.
In this paper we give the full classification of all possible left invariant Riemannian metrics on this Lie group.
We prove that all of these metrics are of constant negative scalar curvature and only one of them is Einstein (up to isometry and scaling). Finally, we present the relation between Ricci solitons on Heisenberg group and Einstein metric on $\Chn$.
\end{abstract}

\section* {Introduction}

Since $\Chn$ is symmetric space of negative sectional curvature, by the classical results of Heintze \cite{Heintze, Heintze2}, it may be viewed as a connected solvable real Lie group with a left invariant metric. This group, denoted by $\Cg$, is the noncompact part of the Iwasawa decomposition  $SU(1,n) = KAN$  of the isometry group of Complex hyperbolic space.
The compact part is isomorphic to $U (n)$, the nilpotent part $N$ is Heisenberg group $\Hnn$, and the abelian part is 1-dimensional. The semidirect product $\Cg=AN$ acts simply transitively on $\Chn$, giving it a structure of a Lie group with the left invariant metric inherited form $\Chn$.

Now an interesting question arises: what are all other possible left invariant Riemannian metric on this Lie group?

There are two standard approaches to the classification problem based on the moduli spaces of left invariant Riemannian metrics on a given Lie group. One is to fix Lie algebra commutators, then consider the space of all possible left invariant metrics, and find the simplest representatives under the action of the automorphism group.  This approach we use to classify all the non-isometric left invariant Riemannian metrics on $\Cg$. It relies on the fact that $\Cg$ is completely solvable, therefore, by the results of Alekseevsky \cite{Ale1,Ale2}, the isometry classes of left invariant metrics are exactly the orbits of the automorphism group acting on the space of left invariant metrics. Slightly different but equivalent approach is to fix an orthonormal (or pseudo-orthonormal) base, and classify all possible Lie brackets up to the action of the automorphism group. In the space of all bilinear skew-symmetric forms, the Jacobi identity defines the hypersurface of admittable Lie brackets. The isometry classes are again the orbits of the automorphism group. Both approaches are used by Lauret in search for distinguished left invariant metrics (Ricci solitons) on nilpotent Lie groups \cite{Lauretnilsoliton}. The first method is systematically outlined in \cite{TamaruOrbite}, and the second one in \cite{TamaruMilnorType}, where Tamaru with coauthors called it Milnor-type theorems in reference to the grounding work of Milnor \cite{Milnor}, who used it for the first time to classify all left invariant Riemannian metrics on three-dimensional unimodular Lie groups.
Although Milnor's method relies on existence of the cross product in dimension three, lots of results have been later obtained for Riemannian and Lorentzian cases in dimensions three and four (see for example \cite{BB,neda,calvaruso,ChPapa,CorParker,Lauretdim34,Rahmani}), and recently, for dimension four with neutral signature as well \cite{Tijana4sol,Tijana4nil}.

Results in arbitrary dimension are more recent. All the left invariant Riemannian and Lorentzian metrics on Heisenberg group were classified in \cite{Srdjan}. Pseudo-Riemannian metrics of Real hyperbolic space modelled as a Lie group, have been considered both by the variation of Lie brackets \cite{TamaruPseudoR}, and by the variation of inner products \cite{TijanaiSrdjan}.
It has been shown in \cite{LauretOne} that the only connected and simply-connected Lie groups admitting only one left-invariant Riemannian metric up to scaling and isometry are the Euclidean
space, Real hyperbolic space and $H_3\times \mathbb R^n$ (product of three-dimensional Heisenberg group and Euclidian space). The Lorentzian metrics on $H_3\times \mathbb R^n$ have been classified in \cite{TamaruHxR}.

Algebraic Ricci solitons on nilpotent and solvable Lie groups are introduced by Lauret \cite{Lauretnilsoliton, Lauretsolsoliton}. Since the Lie group $\Cg$ is solvable one-dimensional abelian extension of the nilpotent Heisenberg group, it is interesting to see the explicit relation of these two groups in the context of  more general Lauret's work.
Using the classification of all left invariant Riemannian and Lorentzian metrics on arbitrary-dimensional Heisenberg group from \cite{Srdjan}, Nasehi classified nilsolitons on Heisenberg group \cite{Nasehi}. In Riemannian case, the result is conveniently simple, namely, there is only one nilsoliton up to action of the automorphism group and scaling. The extension of this nilsoliton gives rise to the Einstein metric on $\Cg$. This is an example of codimension one Ricci soliton subgroups of solvable Iwasawa groups, classified recently by Dom\'inguez-V\'azquez et.\ al.\ \cite{Vazquez}.\\

This paper is orgainzed as follows: In Section \ref{sec:preliminaries} we introduce the basic notation. Metric Lie groups and algebras are defined, together with isometry classes among them.

The main result of this paper is the classification of Riemannian left invariant metrics on $\Cg$, presented in Section \ref{sec:classification}, Theorem \ref{th:metrike}. We describe the group of automorphisms of it's Lie algebra $Aut(\chn)$ in Lemma \ref{le:auto} and Corollary \ref{cor:podgrupe}.
It is shown that it contains the Symplectic group $Sp(2n-2,\RR)$, which plays important role in the proof of the classification theorem, i.e.\ it permits a partial diagonalization by the symplectic eigenvalues.

In Section \ref{sec:geometry} we explore the geometrical properties of Lie group $\Cg$ equipped with various left invariant metrics $g(p,x,\sigma,\beta)$. This provides a whole class of Riemannian solvmanifolds which might be interesting for various further research. The curvature tensor is given both explicitly and on the exterior algebra. None of the metrics is flat and all of them have a constant negative sectional curvature. There is only one Einstein metrics up to automorphisms and scaling, and we show how it can be obtained from Heisenberg nilsoliton.

\section{Preliminaries}
\label{sec:preliminaries}

Complex hyperbolic space is non-compact rank-one symmetric space of negative sectional curvature:
$$\Chn =  SU(1,n)/S(U(1)\times U(n)).$$
Therefore, it is a solvmanifold, i.e.\ it can be represented as a connected solvable Lie group with a left invariant metric \cite{Heintze, Heintze2}. This group is semidirect product of abelian and nilpotent part (Heisenberg group) of the Iwasawa decomposition of its isometry group:
$$\Cg=\RR \ltimes \Hnn.$$

Lie algebra $\chn$  of the Lie group $\Cg$ is semidirect product of abelian and Heisenberg algebra
\begin{equation}
\label{eq:semi}
\chn = \RR \ltimes \hh.
\end{equation}
It  is spanned by vectors $X, Y_1,\dots ,Y_{n-1}, Z_1,\dots, Z_{n-1}, W$ with nonzero commutators:
\begin{equation}
\label{com_chn}
\begin{gathered}
[X, Y_i]=\frac{1}{2}Y_i,\quad [X, Z_i]=\frac{1}{2}Z_i,\quad [X, W]=W, \quad [Z_j, Y_i]=\delta_{ij}W,\\
 i,j \in\{1,\dots,n-1\}.
\end{gathered}
\end{equation}
This algebra is 3-step solvable and has 1-dimensional center spanned by the vector W.

If  we use identification of  $\CC ^n \cong \RR ^{2n}$ of the form
$$(z_1, \dots, z_n) \mapsto (x_1, \dots, x_n, y_1, \dots , y_n),\enskip z_k = x_k + iy_k, \enskip k \in \{1, \dots , n\},$$
then the multiplication by $i$ on $\CC ^n $ induces the standard complex structure on $\RR ^{2n}$, given by the matrix
$$
J_n =
\begin{pmatrix}
0 & I_n\\
-I_n & 0
\end{pmatrix},
$$
where $I_n$ is $n\times n$ identity matrix.\\
The standard symplectic form in vector space $\RR ^{2n}$ is
$$\omega (u,v)  = u^TJ_nv, \quad u,v \in \RR ^{2n}.$$
Symplectic group is the group of all linear transformations of $\RR ^{2n}$ preserving $\omega$, that is
\begin{align}
\label{SymplecticGroup}
Sp (2n, \RR) &= \{ F \in Gl_{2n}(\RR)  \ | \  \omega (Fu, Fv) = \omega (u,v)  \} \nonumber \\
&= \{ F \,  | \,  F^TJ_nF =J_n \}.
\end{align}

\begin{Definition}
Two metric Lie algebras (i.e. Lie algebras equipped with inner products) are\\
a) {\em isometric} if there exists a homomorphism of vector spaces that preserves curvature tensor and its covariant derivatives.\\
b) {\em isomorphic} if they are isometric and the isometry preserves Lie algebra commutators (i.e.\ they are isomorphic as Lie algebras as well).
\end{Definition}

The relation between isometry of metric Lie algebras and the isometry of corresponding metric Lie groups (Lie group with the left invariant metric) is given by:

\begin{Lemma}[\cite{Ale2}]
\label{le:aleale}
Two metric Lie algebras  are isometric if and only if the corresponding Riemannian spaces are isometric.
\end{Lemma}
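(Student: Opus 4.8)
The plan is to prove the two implications separately: the forward direction by differentiating a Riemannian isometry at the identity, and the converse---which is the substantive one---by an extension argument of Cartan--Ambrose--Hicks type. Throughout, let $G$ and $G'$ be the simply connected Lie groups carrying the left invariant metrics determined by the two inner products, with identities $e,e'$, and identify $T_eG$ and $T_{e'}G'$ with the underlying metric Lie algebras.

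For the implication \emph{Riemannian spaces isometric $\Rightarrow$ metric Lie algebras isometric}, suppose $\phi\colon G\to G'$ is a Riemannian isometry. Since the target metric is left invariant, every left translation is an isometry, so after replacing $\phi$ by $L_{\phi(e)^{-1}}\circ\phi$ we may assume $\phi(e)=e'$. Then $d\phi_e\colon T_eG\to T_{e'}G'$ is a linear isometry of inner product spaces. Because an isometry preserves the Levi-Civita connection, the curvature tensor and all of its covariant derivatives, $d\phi_e$ intertwines $R,\nabla R,\nabla^2R,\dots$ evaluated at the identities. These tensors at the identity are computed purely algebraically from the bracket and the inner product via the Koszul formula, so $d\phi_e$ is exactly an isometry of the metric Lie algebras in the sense of the Definition.

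The converse is the harder direction and the main obstacle, since infinitesimal data at a single point must be promoted to an actual diffeomorphism. Starting from a linear isometry $f$ of the metric Lie algebras that preserves the curvature tensor and all its covariant derivatives at the identity, I would construct a global Riemannian isometry $G\to G'$. The key structural facts are that a left invariant metric is homogeneous---hence complete and real-analytic---and that $\Cg$ is simply connected. One then invokes the classical principle, going back to E.\ Cartan and made precise by the Cartan--Ambrose--Hicks theorem (and, in the homogeneous setting, by Singer and Nomizu), that a complete simply connected Riemannian manifold is determined up to isometry by the curvature tensor and its covariant derivatives at one point: the data $f$ together with radial parallel transport defines a local isometry near $e$, real-analyticity and completeness permit its continuation along every path, and simple connectivity makes the continuation single-valued. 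The work that remains is precisely the verification of these hypotheses in our setting---completeness (automatic from homogeneity), simple connectivity of $\Cg$, and the real-analyticity of left invariant metrics that licenses the analytic continuation.
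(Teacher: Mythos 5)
The paper does not actually prove this lemma: it is imported verbatim from Alekseevsky's work \cite{Ale2} and used as a black box, so there is no internal proof to compare yours against. Judged on its own merits, your sketch is the standard argument and is sound in outline. The forward direction is complete as written: normalizing by a left translation so that $\phi(e)=e'$ costs nothing because left translations are isometries of a left invariant metric, and $d\phi_e$ then intertwines the inner products and all the tensors $\nabla^kR$ at the identity, which is precisely the paper's notion of isometric metric Lie algebras. For the converse you correctly identify the three hypotheses that make the Cartan--Ambrose--Hicks/analytic-continuation machinery applicable --- completeness (from homogeneity), real-analyticity of a left invariant metric in the canonical analytic structure of the Lie group, and simple connectivity of the group corresponding to the Lie algebra --- and you correctly note that analyticity is what upgrades agreement of $\{(\nabla^kR)_e\}_{k\ge0}$ at a single point to a local isometry (this fails in the merely smooth category, so flagging it is essential). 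What remains unproved in your write-up is exactly that classical continuation step (Kobayashi--Nomizu, Ch.~VI, or Singer's infinitesimal homogeneity results), which you invoke rather than carry out; since the paper itself delegates the entire statement to \cite{Ale2}, invoking that classical theorem by name is a reasonable stopping point, but you should state the precise version you are using so the reader can check that the hypotheses match.
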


What makes it possible to classify all non-isometric inner products is the following Lemma and the fact that Lie algebra $\chn$ is completely solvable:

\begin{Lemma}[\cite{Ale1, Ale2}]
\label{le:ale}
Isometric completely solvable metric Lie algebras are isomorphic.
\end{Lemma}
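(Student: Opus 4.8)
The plan is to lift the statement from metric Lie algebras to the associated simply connected metric Lie groups, and to deduce the bracket-preserving isomorphism from the rigidity of the isometry group of a completely solvable solvmanifold. Denote by $(\mathfrak{s}_1,\langle\cdot,\cdot\rangle_1)$ and $(\mathfrak{s}_2,\langle\cdot,\cdot\rangle_2)$ the two isometric completely solvable metric Lie algebras, and by $S_1,S_2$ the corresponding simply connected Lie groups carrying the left invariant metrics $g_1,g_2$. By Lemma \ref{le:aleale} the hypothesis is equivalent to the existence of a Riemannian isometry $F\colon (S_1,g_1)\to(S_2,g_2)$, and composing with a left translation we may assume $F(e_1)=e_2$. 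The goal is to manufacture a single linear map $\mathfrak{s}_1\to\mathfrak{s}_2$ that is simultaneously an isometry of the inner products and a Lie algebra homomorphism; this is exactly what \emph{isomorphic} requires, and such a map automatically preserves the curvature tensor and its covariant derivatives.

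First I would realize both group structures inside the isometry group of the common Riemannian manifold $M=(S_2,g_2)$. Left translations embed $S_2$ as a simply transitive subgroup $S_2^{L}\subset I(M)$, while conjugation by $F$ embeds $S_1$ as a second simply transitive subgroup $S_1'=\{F\circ L_g\circ F^{-1}\mid g\in S_1\}\subset I(M)$; the latter consists of isometries precisely because $g_1$ is left invariant and $F$ is an isometry. Both subgroups are completely solvable, being isomorphic to $S_1$ and $S_2$, and both act simply transitively on $M$.

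The decisive step---and the only place the hypothesis genuinely enters---is the structural fact that for a completely solvable metric Lie group the group of left translations is \emph{normal} in the identity component of the isometry group, so that $I_0(M)=S_2^{L}\rtimes K$ with the point stabilizer $K$ acting by orthogonal automorphisms; equivalently, any two simply transitive completely solvable subgroups of $I(M)$ are conjugate by some $\psi\in I_0(M)$. This is the content of the isometry-group description underlying \cite{Ale1,Ale2}; for solvmanifolds that are not completely solvable it fails, the discrepancy being measured by the Gordon--Wilson modifications, which is exactly why isometric but non-isomorphic examples occur there. Granting it, I would choose $\psi$ with $\psi\,S_1'\,\psi^{-1}=S_2^{L}$ and set $\Phi=\psi\circ F$, renormalising $\psi$ by a left translation so that $\Phi(e_1)=e_2$.

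Finally I would verify that $\Phi$ is the required map. It is an isometry as a composition of isometries, and by construction it intertwines left translations, $\Phi\circ L_g=L_{\Phi(g)}\circ\Phi$, which together with $\Phi(e_1)=e_2$ forces $\Phi(gg')=\Phi(g)\Phi(g')$, so that $\Phi\colon S_1\to S_2$ is a Lie group isomorphism. Its differential $d\Phi_{e_1}\colon\mathfrak{s}_1\to\mathfrak{s}_2$ is then at once a linear isometry and a Lie algebra isomorphism, i.e.\ an isomorphism of metric Lie algebras. The hard part is the normality and conjugacy assertion of the third paragraph: the rest is formal bookkeeping, whereas that rigidity is the substantive theorem and precisely the step in which complete solvability cannot be dropped.
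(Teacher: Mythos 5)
The paper does not prove this lemma at all: it is quoted verbatim from \cite{Ale1,Ale2} and used as a black box, so there is no internal proof to compare yours against. Your write-up correctly identifies the standard reduction --- pass to the simply connected metric Lie groups via Lemma \ref{le:aleale}, realize both groups as simply transitive subgroups of the isometry group $I(M)$ of a single Riemannian manifold, conjugate one onto the other, and observe that an isometry intertwining left translations and fixing base points is a Lie group isomorphism whose differential is the desired metric Lie algebra isomorphism. That formal bookkeeping is sound, including the normalization of $\psi$ by a left translation and the verification that $\Phi\circ L_g=L_{\Phi(g)}\circ\Phi$ together with $\Phi(e_1)=e_2$ forces multiplicativity.

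The gap is the one you yourself flag: the assertion that any two simply transitive completely solvable subgroups of $I(M)$ are conjugate (equivalently, that the left translations form a normal subgroup of $I_0(M)$ in the completely solvable case) is not a lemma you can ``grant'' --- it \emph{is} the theorem of Alekseevsky being cited, essentially equivalent to the statement you are trying to prove. Your argument therefore reduces the lemma to itself rather than proving it; everything you actually establish is the easy equivalence between the conjugacy statement and the metric-Lie-algebra statement. To close the gap one would need the actual content of \cite{Ale1,Ale2} (or the later Gordon--Wilson framework): roughly, writing $I_0(M)=G=KS$ with $S$ a simply transitive solvable subgroup, showing that for completely solvable $S$ the ``modification map'' is trivial, so $S$ is determined up to conjugacy by $M$ alone. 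Your correct observation that this is precisely where complete solvability enters, and that non-completely-solvable counterexamples are governed by Gordon--Wilson modifications, shows you understand where the difficulty lives, but the difficulty itself remains unaddressed.
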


\section{Classification of left invariant Riemannian metrics on $\Cg$}
\label{sec:classification}

By left translations, every left invariant inner product on a Lie algebra uniquely determines a left invariant metric on a Lie group. Therefore, the problem of  classification of all non-isometric left invariant Riemannian metrics on the Lie group $\Cg$ is equivalent to classification of all non-isometric positive definite inner products on the Lie algebra $\chn$.

In a fixed basis with commutators (\ref{com_chn}), inner products are represented by the symmetric positive definite matrices S. The set of orbits of the automorphism group $Aut(\chn)$ acting of the space of inner products is called \textit{Moduli space} in \cite{TamaruOrbite}. Every orbit is in one-to-one correspondence to a class of equivalence of the relation
 $S\sim F^TSF$,  $F\in Aut(\chn)$.
Thus, the classification of non-isometric metrics is equivalent to finding the simplest representatives of these orbits. One of the key roles in the proof of Classification theorem is the fact that $Aut(\chn)$ contains the Symplectic group $Sp(2n-2,\RR)$, which allows the diagonalization by symplectic eigenvalues, and thus significantly simplifying the set of representatives.

\begin{Lemma}
\label{le:auto}
The automorphisms group of the Lie algebra $\chn$   is
\begin{equation}
\label{eq:aut}
\begin{split}
Aut(\chn)&=
\{
\begin{pmatrix}
1 & 0 & 0  \\
u & M & 0 \\
a & v^T & \lambda
\end{pmatrix}
 \, | \,
M \in GL(2n-2, \RR), \enskip  M^T J_{n-1} M = \lambda J_{n-1}, \\
& v\in \RR ^{2(n-1)}, \enskip a, \lambda \in \RR, \enskip  \lambda \neq 0, \enskip  u=\frac{1}{2\lambda}MJ_{n-1} v
\}.
\end{split}
\end{equation}

\end{Lemma}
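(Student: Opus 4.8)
The plan is to determine the automorphisms by first pinning down the subspaces that any automorphism must preserve, and then reading off the constraints bracket by bracket from the structure equations (\ref{com_chn}). I would begin by identifying the characteristic ideals. The derived algebra is $[\chn,\chn]=\mathrm{span}(Y_1,\dots,Y_{n-1},Z_1,\dots,Z_{n-1},W)$, and the second derived algebra is $[[\chn,\chn],[\chn,\chn]]=\RR W$ (which is also the center of the Heisenberg ideal $\hh$). Both are characteristic, hence preserved by every automorphism $\phi$. Ordering the basis as $X$; then $Y_1,\dots,Z_{n-1}$; then $W$, preservation of $\RR W$ forces $\phi(W)=\lambda W$ with $\lambda\neq 0$ (last column $(0,0,\lambda)^{T}$), and preservation of $[\chn,\chn]$ forces $\phi(Y_i),\phi(Z_i)$ to have no $X$-component. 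Thus $\phi$ already has the block lower-triangular shape of (\ref{eq:aut}), except that the $(X,X)$ entry is an a priori unknown scalar $c$; the remaining data are $M\in GL(2n-2,\RR)$, vectors $u,v\in\RR^{2(n-1)}$ and scalars $a,\lambda$.

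Next I would encode the Heisenberg bracket on the middle space $V=\mathrm{span}(Y_i,Z_i)$ as $[p,q]=-(p^{T}J_{n-1}q)\,W$ for $p,q\in V$, so that the symplectic form governs the only nontrivial bracket inside $[\chn,\chn]$. Imposing $\phi[X,p]=[\phi X,\phi p]$ and comparing the $V$-components uses $[X,p]=\tfrac12 p$ together with the invertibility of $M$ to force $c=1$ (the induced map on $\chn/[\chn,\chn]$ is the identity); comparing the $W$-components of the same identity yields a first linear relation, $M^{T}J_{n-1}u=-\tfrac12 v$ (up to the chosen sign convention). Imposing $\phi[p,q]=[\phi p,\phi q]$ and comparing $W$-components yields exactly $M^{T}J_{n-1}M=\lambda J_{n-1}$. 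The remaining relations $[X,W]=W$ and $[p,W]=0$ are then checked to hold automatically and to leave $a$ entirely unconstrained.

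The final step is to solve the linear relation for $u$. Using $M^{T}J_{n-1}M=\lambda J_{n-1}$ I would rewrite $M^{T}J_{n-1}=\lambda J_{n-1}M^{-1}$, so that $M^{T}J_{n-1}u=-\tfrac12 v$ becomes $M^{-1}u=-\tfrac{1}{2\lambda}J_{n-1}^{-1}v$; combining with $J_{n-1}^{-1}=-J_{n-1}$ gives $u=\tfrac{1}{2\lambda}MJ_{n-1}v$, matching (\ref{eq:aut}). Conversely, any matrix of the stated form satisfies all the structure equations, and since $M^{T}J_{n-1}M=\lambda J_{n-1}$ forces $(\det M)^{2}=\lambda^{2(n-1)}$, hence $\det M=\pm\lambda^{n-1}\neq 0$, such a map is automatically invertible; so the listed conditions are sufficient as well as necessary.

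I expect the only delicate point to be the symplectic bookkeeping in this last inversion—keeping careful track of the factor $\lambda$ and of the identity $J_{n-1}^{-1}=-J_{n-1}$—whereas deducing $c=1$ and $M^{T}J_{n-1}M=\lambda J_{n-1}$ is routine once the characteristic ideals $[\chn,\chn]$ and $\RR W$ are fixed. A minor care is also needed with sign conventions when writing the $V$-bracket via $J_{n-1}$, since these propagate into the relation between $u$ and $v$.
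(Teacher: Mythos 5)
Your proof is correct and follows essentially the same route as the paper: use the invariant ideals to force the block-triangular form, then read off $M^TJ_{n-1}M=\lambda J_{n-1}$ from the Heisenberg brackets and $u=\tfrac{1}{2\lambda}MJ_{n-1}v$ from the relations $[X,Y_i]=\tfrac12 Y_i$, $[X,Z_i]=\tfrac12 Z_i$. One detail in your favour: you characterize $\RR W$ as the second derived algebra (equivalently the center of the derived ideal $\hh$), which is the correct characteristic ideal to invoke, whereas the paper appeals to the ``center'' of $\chn$ itself --- which is actually trivial since $[X,W]=W$ --- so your version of that step is the more accurate one.
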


\begin{proof}
Fix a basis $(X, Y_1,\dots ,Y_{n-1}, Z_1,\dots, Z_{n-1}, W)$ with commutators of the form (\ref{com_chn}).
By definition, every automorphism $F\in\chn$ must preserve commutators, i.e.
$$[FX_1, FX_2]=F[X_1,X_2], \qquad X_1,X_2\in \chn.$$
It also preserves center and the derived algebra. In the case of $\chn$ center is one-dimensional, spanned by the vector W. The derived algebra is $\hh$, the Heisenberg algebra spanned by the vectors
$Y_1,\dots ,Y_{n-1}, Z_1,\dots, Z_{n-1}, W$. This directly simplifies automorphisms represented in a bloc matrix form:
$$ F=
\begin{pmatrix}
1 & 0 & 0  \\
u & M & 0 \\
a & v^T & \lambda
\end{pmatrix}.
$$
Futhermore, the relations\\
$$[FZ_j, FY_i]=\delta_{ij}FW, \quad [FY_i, FY_j]=0, \quad [FZ_i, FZ_j]=0, \quad  i,j \in\{1,\dots,n-1\}$$
impose a restriction on $M$, that is $M$ has to be almost symplectic: $$ M^T J_{n-1} M = \lambda J_{n-1}.$$
Finally, the relations:
$$[FX, FY_i]=\frac{1}{2}FY_i,\quad [FX, FZ_i]=\frac{1}{2}FZ_i,
\quad i,j \in\{1,\dots,n-1\},$$
after short calculation, give the correlation between vectors $u$ and $v$:
$$ u=\frac{1}{2\lambda}MJ_{n-1} v.$$
\end{proof}

Note that all the automorphisms are divided in three types. We call them \textbf{diagonal, symplectic} and \textbf{generalized translations}.

\begin{Corollary}
\label{cor:podgrupe}
The identity component of the automorphism group of the Lie algebra $\chn$ is
$$Aut_0(\chn)=D \ltimes ( Sp \ltimes T ),$$
where
\begin{align*}
D&=\left \{
\begin{pmatrix}
1 & 0 & 0  \\
0 & \alpha I & 0 \\
0 & 0 & \alpha^2
\end{pmatrix}  | \, \alpha>0
\right \},\\
Sp &=\left \{
\begin{pmatrix}
1 & 0 & 0  \\
0 & M & 0 \\
0 & 0 & 1
\end{pmatrix}  | \, M^TJ_{n-1}M=J_{n-1}
\right \}
\cong Sp(2n-2,\RR),\\
T&=\left \{
\begin{pmatrix}
1 & 0 & 0  \\
\frac{1}{2}J_{n-1}v &  I & 0 \\
a & v^T & 1
\end{pmatrix}  |  \, a\in \RR, v\in \RR^{2n-2}
\right \}.
\end{align*}

\end{Corollary}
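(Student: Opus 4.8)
The plan is to identify $Aut_0(\chn)$ as the subset of $Aut(\chn)$ from Lemma \ref{le:auto} on which the scalar $\lambda$ is positive, and then to factor each such automorphism explicitly into a diagonal, a symplectic, and a translation part. The starting observation is that $F\mapsto\lambda$ is a continuous homomorphism $Aut(\chn)\to\RR^{*}$, since the block relation $M^{T}J_{n-1}M=\lambda J_{n-1}$ composes multiplicatively; taking determinants gives $(\det M)^{2}=\lambda^{2n-2}$, so the block $M$ ranges over the conformal symplectic group $\{M\mid M^{T}J_{n-1}M=\lambda J_{n-1},\ \lambda\neq0\}$. For $\lambda>0$ the substitution $M=\sqrt{\lambda}\,M'$ yields $(M')^{T}J_{n-1}M'=J_{n-1}$, identifying the locus $\{\lambda>0\}$ with $\RR^{>0}\times Sp(2n-2,\RR)$.

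First I would pin down the identity component. The set $\{F\mid\lambda>0\}$ is open and closed in $Aut(\chn)$ (the preimage of $\RR^{>0}$ under the continuous character $\lambda$), contains the identity, and is connected, being the continuous image of the connected set $\RR^{>0}\times Sp(2n-2,\RR)\times\RR^{2n-2}\times\RR$ under the parametrization of Lemma \ref{le:auto} (here I use that $Sp(2n-2,\RR)$ is connected). Hence $Aut_0(\chn)=\{F\mid\lambda>0\}$.

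Next I would note that $D\cong(\RR^{>0},\times)$, $Sp\cong Sp(2n-2,\RR)$ and $T\cong\RR^{2n-1}$ are each connected subgroups of $Aut_0(\chn)$, so $D\ltimes(Sp\ltimes T)\subseteq Aut_0(\chn)$. For the reverse inclusion I would produce the explicit factorization: given $F$ with parameters $(M,v,a,\lambda)$, $\lambda>0$, set $\alpha=\sqrt{\lambda}$, $M'=M/\sqrt{\lambda}$, $w=v/\lambda$, $b=a/\lambda$, and check that the product of the corresponding $d\in D$, $s\in Sp$, $t\in T$ reproduces $F$. The one point needing verification is that the off-diagonal block generated by this product, namely $\tfrac{\alpha}{2}M'J_{n-1}w$, matches the constrained value $u=\tfrac{1}{2\lambda}MJ_{n-1}v$ of Lemma \ref{le:auto}; substituting the chosen parameters makes the two sides coincide identically, which shows the factorization both exists and is unique.

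Finally I would justify the semidirect-product symbols through the conjugation relations obtained by block multiplication. One checks $d s d^{-1}=s$ and $d t d^{-1}\in T$ (with $v\mapsto\alpha v$, $a\mapsto\alpha^{2}a$), so $D$ normalizes $Sp\ltimes T$ and meets it trivially. The essential computation is that $Sp$ normalizes $T$: conjugating $t\in T$ by $s\in Sp$ sends the last-row block $v^{T}$ to $v^{T}(M')^{-1}$ and the first-column block $\tfrac12 J_{n-1}v$ to $\tfrac12 M'J_{n-1}v$, and these remain compatible with the defining relation of $T$ precisely because $M'J_{n-1}(M')^{T}=J_{n-1}$, the equivalent form of the symplectic condition $(M')^{T}J_{n-1}M'=J_{n-1}$. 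This is the step I expect to be the main obstacle, since it is exactly where the symplectic identity is indispensable; granting it, $T\triangleleft Sp\ltimes T$ together with the trivial intersections $Sp\cap T=\{I\}$ and $D\cap(Sp\ltimes T)=\{I\}$ yields $Aut_0(\chn)=D\ltimes(Sp\ltimes T)$.
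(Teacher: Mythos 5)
Your argument is correct and is precisely the verification that the paper leaves implicit: the Corollary is stated without proof as an immediate consequence of Lemma \ref{le:auto}, and your route (restricting to the clopen connected locus $\lambda>0$, rescaling $M=\sqrt{\lambda}\,M'$, factoring $F=d\,s\,t$, and checking the conjugation relations, with $M'J_{n-1}(M')^{T}=J_{n-1}$ doing the work for $Sp$ normalizing $T$) is exactly the intended one. The only slip is cosmetic: $T$ is isomorphic to the Heisenberg group $H^{2n-1}$ rather than to the abelian group $\RR^{2n-1}$ (the product law is $(v_1,a_1)(v_2,a_2)=(v_1+v_2,\,a_1+a_2+\tfrac12 v_1^{T}J_{n-1}v_2)$), but since you only use that $T$ is connected, nothing in the proof is affected.
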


Now we may proceed to the main theorem:

\begin{Theorem}[Classification theorem]
\label{th:metrike}
All non-isometric positive definite inner products on Lie algebra $\chn$,
in a basis with commutators of the form (\ref{com_chn}),
are represented by the matrices:

\begin{equation}
\label{eq:Schn}
S(p,x,\sigma,\beta)=
\begin{pmatrix}
p &x^T& 0 & 0\\
x& \sigma & 0 & 0\\
0 & 0 &\sigma&0\\
0 & 0 & 0 & \beta
\end{pmatrix} ,
\end{equation}
where $\enskip p, \beta >0, \quad x=(x_1, \dots, x_{n-1})^T \in \mathbb R^{n-1}, \quad x_i \geq 0,$\\
$\sigma=\diag(\sigma_1, \dots, \sigma_{n-2},1), \quad \sigma_1\ge \dots \ge \sigma_{n-2}\ge1$.
\end{Theorem}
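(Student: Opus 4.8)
The plan is to use Lemma~\ref{le:ale}: since $\chn$ is completely solvable, two inner products are isometric precisely when they lie in the same orbit of $Aut(\chn)$ acting by $S\mapsto F^TSF$, so it suffices to exhibit a representative of the form \eqref{eq:Schn} in each orbit. I would work with the identity component $Aut_0(\chn)=D\ltimes(Sp\ltimes T)$ of Corollary~\ref{cor:podgrupe} and normalize $S$ in four stages, peeling off the blocks coming from the splitting $\chn=\RR X\oplus V\oplus\RR W$, where $V=\mathrm{span}(Y_1,\dots,Y_{n-1},Z_1,\dots,Z_{n-1})$. Accordingly I write the symmetric positive definite matrix
$$
S=\begin{pmatrix} a & b^T & c\\ b & P & d\\ c & d^T & e\end{pmatrix},\qquad a,c,e\in\RR,\ b,d\in\RR^{2n-2},
$$
with $P$ the $(2n-2)\times(2n-2)$ block on $V$.

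First I would use the generalized translations $T$ to make $W$ orthogonal to $X$ and $V$. Taking $F\in T$ with vector $v$ and scalar $\tau$ (so that its $(V,X)$ entry is $u=\tfrac12 J_{n-1}v$) and computing $F^TSF$, the new $(W,V)$ block equals $d^T+ev^T$ and the new $(W,X)$ entry equals $c+d^Tu+e\tau$. Since $e=S(W,W)>0$, the choice $v=-d/e$ kills the $(W,V)$ block, and because $d^TJ_{n-1}d=0$ by skew-symmetry of $J_{n-1}$, the term $d^Tu$ vanishes and the remaining equation reduces to $c+e\tau=0$, solved by $\tau=-c/e$. This produces a congruent matrix with $W$ orthogonal to $X$ and $V$, a new positive definite block $P'=P-dd^T/e$ on $V$, a new coupling $b'$, and unchanged entries $a$ and $e$. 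The subgroups $Sp$ and $D$ fix $X$ and $W$ and preserve this $W$-orthogonal form, so they remain available.

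The crux is the normalization of $P'$ by the symplectic group. By Williamson's symplectic eigenvalue theorem there is $M\in Sp(2n-2,\RR)$ (i.e.\ $M^TJ_{n-1}M=J_{n-1}$) with $M^TP'M=\mathrm{diag}(\Sigma,\Sigma)$, where $\Sigma=\diag(\sigma_1,\dots,\sigma_{n-1})$ and $\sigma_1\ge\cdots\ge\sigma_{n-1}>0$ are the symplectic eigenvalues; this is exactly the partial diagonalization announced after Corollary~\ref{cor:podgrupe}, and it explains why the two diagonal blocks equal the common $\sigma$ in \eqref{eq:Schn} and why the $Y$--$Z$ cross block vanishes. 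Simultaneously $b'\mapsto M^Tb'$. Applying next the scaling $D$ with $\alpha=\sigma_{n-1}^{-1/2}>0$ replaces $\Sigma$ by $\alpha^2\Sigma$, normalizing the smallest eigenvalue to $1$ and yielding $\sigma_1\ge\cdots\ge\sigma_{n-2}\ge\sigma_{n-1}=1$; it leaves $a$ fixed, rescales the coupling by $\alpha$, and sends $e$ to $\alpha^4e$.

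It remains to clean the $X$--$V$ coupling $b'=(b^Y,b^Z)$ using the stabilizer of $\mathrm{diag}(\Sigma,\Sigma)$ inside $Sp$. The block rotations $M=\left(\begin{smallmatrix}C & S\\ -S & C\end{smallmatrix}\right)$ with $C=\diag(\cos\theta_i)$, $S=\diag(\sin\theta_i)$ are symplectic and, since $\Sigma$ is diagonal and hence commutes with $C,S$, satisfy $M^T\mathrm{diag}(\Sigma,\Sigma)M=\mathrm{diag}(\Sigma,\Sigma)$; they act on $b'$ by rotating each pair $(b^Y_i,b^Z_i)$ through $\theta_i$. Choosing $\theta_i$ so that each pair is sent to $(x_i,0)$ with $x_i=\sqrt{(b^Y_i)^2+(b^Z_i)^2}\ge0$ simultaneously annihilates the $X$--$Z$ coupling and makes the $X$--$Y$ coupling nonnegative. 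Setting $p=a>0$ and $\beta=\alpha^4e>0$ (both positive by positive definiteness of $S$) yields exactly the matrix $S(p,x,\sigma,\beta)$ of \eqref{eq:Schn}. The main obstacle is the symplectic diagonalization, which relies on Williamson's theorem together with the inclusion $Sp(2n-2,\RR)\subset Aut(\chn)$; the correct identification of its residual stabilizer as the block rotations above is what finally forces $x_i\ge0$ and the vanishing of the remaining cross terms. That distinct admissible parameters give non-isometric metrics can be verified afterwards, the ordered $\sigma_i$ being isometry invariants arising from the symplectic spectrum.
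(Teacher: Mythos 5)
Your proposal is correct and follows essentially the same route as the paper: normalize by the subgroups $T$, $Sp$, $D$ of Corollary~\ref{cor:podgrupe}, kill the $W$-couplings by a generalized translation, apply Williamson's theorem to symplectically diagonalize the middle block, rescale the smallest symplectic eigenvalue to $1$, and use the residual block rotations stabilizing $\diag(\Sigma,\Sigma)$ to reduce the $X$--$V$ coupling to $(x^T,0)$ with $x_i\ge 0$; the only differences are that you merge the paper's two translation steps into one (correctly using $d^TJ_{n-1}d=0$) and that your remark that the entry $a$ is unchanged by the translation is inaccurate, though harmless since the resulting $(1,1)$ entry is simply the positive parameter $p$.
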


\begin{proof}

We are looking for the simplest representatives of the orbits of the automorphism group  $Aut(\chn)$ acting on the space of positive definite symmetric matrices.

Using the notation from the Corollary \ref{cor:podgrupe} for the subgroups of the group $Aut(\chn)$, denote respectively:
$F_d(\alpha)\in D$, $F_{Sp}(M)\in Sp$, $F_t(v,a)\in T.$

Consider arbitrary positive definite symmetric $2n\times2n$ matrix $S$,  in a block matrix form
$$ S=
\begin{pmatrix}
p_1 & z^T & q  \\
z & \bar{S} & w \\
q & w^T & {\beta _1}
\end{pmatrix}.
 $$

A generalized translation  $F_{t_1} (-\frac{1}{\beta _1}w,0)\in T$ simplifies $S$  to
$$
S_1 = F_{t_1} ^T S F_{t_1} =
\begin{pmatrix}
p_2 & z_1^T & q_1  \\
z_1 & \bar{S}_1 & 0 \\
q_1 & 0 & \beta _1
\end{pmatrix} .
$$
Since  $S_1$ is positive definite, $\bar{S}_1$ is also a positive definite  $(n-2)\times(n-2)$ matrix. By the Williamson theorem  \cite{Williamson}, any Riemannian matrix can be diagonalized by some symplectic matrix  $M_1$. Furthermore, the diagonalizing symplectic matrix can be selected in a way that allows any ordering of symplectic eigenvalues (see Theorem 2.1 from \cite{Srdjan}). Therefore, we choose the symplectic automorphism $ F_{Sp_2}(M_1)\in Sp$, to obtain

$$
S_2=F_{Sp_2}^TS_1 F_{Sp_2}=
\begin{pmatrix}
p_2 & z_1^TM_1 & q_1  \\
M_1^Tz_1 & M_1^T\bar{S}_1M_1 & 0 \\
q_1 & 0 & \beta _1
\end{pmatrix}=
\begin{pmatrix}
p_2 & z_2^T & q_1  \\
z_2 & D_{\bar{\sigma}} & 0 \\
q_1 & 0 & \beta _1
\end{pmatrix},
$$
where $D_{\bar{\sigma}}$ is the diagonal matrix
$
D_{\bar{\sigma}}= \diag(\bar{\sigma}_1, ...,\bar{\sigma}_{n-1}, \bar{\sigma}_1,...,\bar{\sigma}_{n-1}),
$
and\\
$\bar{\sigma}_1 \geq..., \geq \bar{\sigma}_{n-1}>0$ are the  symplectic eigenvalues of matrix $\bar{S}_1.$\\
Now, we choose a simple translation $F_{t_3 }(\vec{0},-\frac{q_1}{\beta _1})\in T$ to obtain

$$S_3=F_{t_3 }^TS_2F_{t_3 }=
\begin{pmatrix}
p_2-\frac{q_1^2}{\beta_1} & z_2^T & 0  \\
z_2 & D_{\bar{\sigma}} & 0 \\
0 & 0 & \beta _1
\end{pmatrix}
=
\begin{pmatrix}
p & z_2^T & 0  \\
z_2 & D_{\bar{\sigma}} & 0 \\
0 & 0 & \beta _1
\end{pmatrix} .
$$
This is further simplified by a diagonal automorphism $F_{d_4}((\bar{\sigma}_{n-1})^{-\frac{1}{2}})\in D$

$$
S_4=F_{d_4}^TS_3F_{d_4}=
\begin{pmatrix}
p & \alpha z_2^T & 0  \\
\alpha z_2 & \alpha^2D_{\bar{\sigma}} & 0 \\
0 & 0 & \alpha^2\beta _1
\end{pmatrix}=
\begin{pmatrix}
p & \bar{z}^T & 0  \\
\bar{z} & \bar{D} & 0 \\
0 & 0 & \beta
\end{pmatrix},
$$
where
\begin{align*}
\bar{D}&=\diag (\frac{\bar{\sigma}_1}{\bar{\sigma}_{n-1}}, \dots, \frac{\bar{\sigma}_{n-2}}{\bar{\sigma}_{n-1}},1, \frac{\bar{\sigma}_1}{\bar{\sigma}_{n-1}}, \dots, \frac{\bar{\sigma}_{n-2}}{\bar{\sigma}_{n-1}},1)\\
&=
\diag(\sigma_1,\dots,\sigma_{n-2}, 1, \sigma_1, \dots,\sigma_{n-2}, 1)\\
&=\diag(\sigma, \sigma).
\end{align*}
To achieve the final form (\ref{eq:Schn}) of the inner product
we identify $ \RR ^{2(n-1)}\cong \CC ^{n-1}$
$$
\bar{z}^T = (\bar{x}_1, \dots , \bar{x}_{n-1}, \bar{y}_1, \dots , \bar{y}_{n-1})\cong (\bar{z}_1, \dots , \bar{z}_{n-1}) .
$$
The transformation
$$
(\bar{z}_1, \dots , \bar{z}_{n-1})\mapsto (e^{i\theta _1}\bar{z}_1, \dots , e^{i\theta _{n-1}}\bar{z}_{n-1})
$$
 is symplectic for any choice of angles $ (\theta _1, \dots , \theta _{n-1}).$
If we choose angles\\
 $ \theta = (\theta _1, \dots , \theta _{n-1})$ in a way that
$$
e^{i\theta _k}\bar{z}_k  = x_k >0, \, k=1, \dots n-1,
$$
by the above identification, the symplectic transformation becomes:
$$
\bar{z}^T = (\bar{x}_1, \dots , \bar{x}_{n-1}, \bar{y}_1, \dots , \bar{y}_{n-1})\mapsto \bar{x}^T=(x_1, \dots, x_{n-1}, 0, \dots, 0)=(x^T,0).
$$
Let us be more precise. In order to explicitly find the last transformation, we define diagonal matrices
$$
A (\theta)  = \diag (\cos \theta _1, \dots ,\cos \theta _{n-1}), \quad B (\theta )= \diag (\sin \theta _1, \dots ,\sin \theta _{n-1})
$$
and take $M(\theta )$ to be the block matrix
$$
M(\theta ) = \begin{pmatrix}
A(\theta ) & -B(\theta )   \\
B(\theta ) & A(\theta )
\end{pmatrix}.
$$
Then $M = M (\theta )$ is symplectic matrix, hence using $F_{Sp_5}(M)\in Sp$  we get
\begin{equation*}
\arraycolsep=4.5pt
S_5=F_{Sp_5}^TS_4F_{Sp_5}=
\begin{pmatrix}
p & \bar{z}^TM & 0  \\
M^T\bar{z} & M^T\bar{D} M & 0 \\
0 & 0 & \beta
\end{pmatrix} =
\begin{pmatrix}
p & \bar{x}^T & 0 \\
\bar{x} & \bar{D} & 0 \\
0 & 0  &\beta
\end{pmatrix}
=
\begin{pmatrix}
p &x^T& 0 & 0\\
x& \sigma & 0 & 0\\
0 & 0 &\sigma&0\\
0 & 0 & 0 & \beta
\end{pmatrix}
\end{equation*}
which is the final form from the statement of the theorem.
\end{proof}

\begin{Remark}
Every inner product on Lie algebra uniquely determines a left invariant metric on Lie group. Thus, the previous theorem gives us the classification of all Riemannian metric on the Lie group $\Cg$.
\end{Remark}

\section{Curvature properties of the Lie group $\Cg$ }
\label{sec:geometry}

We have classified all the non-isometric left invariant Riemannian metrics on $\Cg$ and now we will show some useful curvature properties of these metrics. Let $(X, Y_1,\dots ,Y_{n-1}, Z_1,\dots, Z_{n-1}, W)$ be a left invariant basic of $\chn$ with commutators of the form (\ref{com_chn}).
Denote by $g(p,x,\sigma,\beta)$ the left invariant metrics defined by the inner product $S(p,x,\sigma,\beta)$ in the given basis.

Let $\nabla$ be its Levi-Civita connection. For any left invariant vector fields $X_1$, $X_2$, $X_3$, the Koszul's formula reduces to
\begin{equation}
\label{Koszul}
2g (\nabla _{X_1} X_2, X_3)  = g ([X_1, X_2 ], X_3)- g ([X_2, X_3], X_1 )+ g ([X_3, X_1], X_2).
\end{equation}

From Koszul's formula (\ref{Koszul}) and the fact that $\nabla$ is torsion-free, i.e.   $\nabla _{X_1}X_2 - \nabla _{X_2}X_1 = [X_1, X_2]$, we find all nonzero covariant derivatives:

\begin{eqnarray*}
&&
\nabla_XX=\frac{1}{2z}\left(\sum_{i=1}^{n-1} \frac{x_i^2}{\sigma_i} X- p\sum_{i=1}^{n-1} \frac{x_i}{\sigma_i}Y_i\right), \quad
\nabla_XY_i=\frac{x_i}{2z}\left(X-\sum_{k=1}^{n-1} \frac{x_k}{\sigma_k}Y_k \right), \\
&&
\nabla_{Y_i}X=\frac{x_i}{2z}\left(X-\sum_{k=1}^{n-1} \frac{x_k}{\sigma_k}Y_k \right) - \frac{1}{2}Y_i, \quad
\nabla_{Y_i}Y_j=\frac{\delta_{ij} \sigma_i}{2z}\left(X-\sum_{k=1}^{n-1} \frac{x_k}{\sigma_k}Y_k \right), \\
&&
\nabla_{Y_i}Z_j=-\frac{\delta_{ij}}{2}W =-\nabla_{Z_j}Y_i, \quad
\nabla_{Y_i}W=\frac{\beta}{2\sigma_i}Z_i=\nabla_WY_i, \quad
\nabla_{Z_i}X=-\frac{1}{2}Z_i, \\
&&
\nabla_{Z_i}Z_j=\frac{\delta_{ij} \sigma_i}{2z}\left(X-\sum_{k=1}^{n-1} \frac{x_k}{\sigma_k}Y_k \right),\quad
\nabla_WW=\frac{\beta}{z}\left(X-\sum_{i=1}^{n-1} \frac{x_i}{\sigma_i}Y_i \right),\\
&&
\nabla_WX=-W, \quad
\nabla_WZ_i=\frac{\beta}{2z\sigma_i}\left(x_iX-x_i\sum_{k=1}^{n-1} \frac{x_k}{\sigma_k}Y_k-zY_i \right)=\nabla_{Z_i}W, \\
\end{eqnarray*}
where $z=p-\sum_{i=1}^{n-1} \frac{x_i^2}{\sigma_i}, \quad \sigma_{n-1}=1$.

The Riemann curvature operator, for all $X_1, X_2\in \chn$, is defined by
$$
R(X_1,X_2)=\nabla_{X_1}\nabla_{X_2}-\nabla_{X_2}\nabla_{X_1}-\nabla_{[X_1,X_2]}.
$$
The expressions for the Riemann curvature operators are very complex, so it is convenient to express them in terms of the wedge product (for explicit formulas see appendix).
Since we work with left invariant basis, we have a natural identification \mbox{$T_p\Cg \cong \chn$} for any $p\in \Cg .$F
Using the fact that $g(R(X_1,X_2)X_3,X_4) = -g(R(X_1,X_2)X_4,X_3)$, we know that Riemann curvature operator belongs to the algebra of skew-symmetric endomorphisms $so(g).$ Consequently, we identify the skew-symmetric endomorphisms $so(g)$ with 2-vectors $\Lambda ^2 T_p\Cg $ by the formula
$$
(X_1\wedge X_2)(X_3) := g(X_2, X_3)X_1 - g(X_1,X_3)X_2,
$$
for any $X_3\in T_p\Cg.$
Since $R(X_1,X_2) = -R(X_2,X_1)$, the curvature tensor can be regarded as skew-symmetric operator on the space of 2-vectors
$$
R:\Lambda ^2 T_p\Cg \to \Lambda ^2 T_p\Cg\cong so(g), \quad \quad R(X_1\wedge X_2) := R(X_1,X_2).
$$

\begin{Lemma}\label{klin}
The Riemann curvature operators \mbox{$R:  \Lambda ^2 \chn \to \Lambda ^2 \chn$}
on a basis of the space of 2-vectors $\Lambda ^2 \chn$ are:
\begin{eqnarray*}
R(X, Y_i)=&&-\frac{1}{4z}X\wedge Y_i-\frac{1}{4\sigma_i}Z_i \wedge W,
\\
R(X, Z_i)=&& \frac{1}{4z\sigma_i}
(-\sigma_iX\wedge Z_i-2x_i X\wedge W+x_i\sum_l\frac{x_l}{\sigma_l}Y_l\wedge W+zY_i\wedge W
),
\\
R(X, W)=&& \frac{1}{2z}[-2X\wedge W+\sum_l\frac{x_l}{\sigma_l}Y_l\wedge W\\
&&+\beta\sum_m\frac{1}{\sigma_m^2}(-\frac{3}{2}x_m X\wedge Z_m+x_m \sum_l\frac{x_l}{\sigma_l}Y_l\wedge Z_m+zY_m\wedge Z_m
)
 ],
\end{eqnarray*}
\begin{eqnarray*}
R(Y_i,Y_j)=&& -\frac{1}{4z}Y_i\wedge Y_j-\frac{\beta}{4\sigma_i\sigma_j}Z_i\wedge Z_j,
\\
R(Y_i, Z_j)=&&\frac{1}{4z \sigma_i \sigma_j}[
-x_j\sigma_iY_i\wedge W+2\delta_{ij}\sigma_i\sigma_j(X\wedge W-\sum_l\frac{x_l}{\sigma_l}Y_l\wedge W)-\sigma_i\sigma_jY_i\wedge Z_j\\
&&+2\delta_{ij}\sigma_i \sigma_j\beta\sum_m[\frac{1}{\sigma_m^2}(x_mX\wedge Z_m-x_m\sum_l\frac{x_l}{\sigma_l}Y_l\wedge Z_m-zY_m\wedge Z_m)]\\
&&+\beta(x_jX\wedge Z_i-x_j\sum_l\frac{x_l}{\sigma_l}Y_l\wedge Z_i-zY_j\wedge Z_i)
],
\\
R(Y_i, W)=&& \frac{\beta}{4z\sigma_i}[
\frac{x_i}{\sigma_i}(-X\wedge W+\sum_l\frac{x_l}{\sigma_l}Y_l\wedge W)+
(\frac{z}{\sigma_i}-\frac{2\sigma_i}{\beta})Y_i\wedge W\\
&&+X\wedge Z_i-\sum_l\frac{x_l}{\sigma_l}Y_l\wedge Z_i-\sigma_i\sum_l\frac{x_l}{\sigma_l^2}Y_i\wedge Z_l
],
\\
R(Z_i, Z_j)=&&\frac{1}{4z \sigma_i \sigma_j}[\left(x_i\sigma_j Z_j\wedge W-x_j\sigma_i Z_i\wedge W\right)
-\sigma_i\sigma_jZ_i\wedge Z_j\\
&&
+\beta (x_i X\wedge Y_j -x_jX\wedge Y_i-\sum_l \frac{x_l}{\sigma_l}(x_iY_l\wedge Y_j-x_jY_l\wedge Y_i)-zY_i \wedge Y_j )
],
\\
R(Z_i,W)=&&\frac{\beta}{4z}(
-\frac{1}{\sigma_i}X\wedge Y_i+\frac{1}{\sigma_i}\sum_l\frac{x_l}{\sigma_l}Y_l\wedge Y_i
-\sum_l\frac{x_l}{\sigma_l^2}Z_l\wedge Z_i\\
&&+(\frac{z}{\sigma_i^2}-\frac{2}{\beta})Z_i\wedge W
+\frac{x_i}{\sigma_i}\sum_l\frac{x_l}{\sigma_l^2}Z_l\wedge W
).
\end{eqnarray*}
\end{Lemma}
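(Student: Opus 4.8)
The plan is to compute each operator $R(X_1,X_2)$ directly from $R(X_1,X_2)=\nabla_{X_1}\nabla_{X_2}-\nabla_{X_2}\nabla_{X_1}-\nabla_{[X_1,X_2]}$, substituting the covariant derivatives listed above together with the brackets (\ref{com_chn}), and then to rewrite the resulting skew-symmetric endomorphism as a $2$-vector. Before computing I would record one structural fact that organizes everything. The vector $N:=X-\sum_{k}\frac{x_k}{\sigma_k}Y_k$, which appears in every ``radial'' covariant derivative, is precisely the $g$-orthogonal complement of $\operatorname{span}\{Y_i,Z_i,W\}$ inside the $\{X,Y_i\}$-block: one checks $g(N,Y_i)=x_i-x_i=0$, $g(N,Z_i)=g(N,W)=0$, and $g(N,N)=g(N,X)=p-\sum_k\frac{x_k^2}{\sigma_k}=z$. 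This is exactly why $z$ occupies the denominators and why positive definiteness forces $z>0$. Rewriting the given derivatives as $\nabla_XY_i=\frac{x_i}{2z}N$, $\nabla_{Y_i}Y_j=\nabla_{Z_i}Z_j=\frac{\delta_{ij}\sigma_i}{2z}N$, $\nabla_WW=\frac{\beta}{z}N$, and so on, compresses them and makes the later cancellations transparent.

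With this in hand I would proceed pair by pair. For fixed $(X_1,X_2)$ I evaluate $\nabla_{X_1}(\nabla_{X_2}E)$ and $\nabla_{X_2}(\nabla_{X_1}E)$ on each basis vector $E$, subtract $\nabla_{[X_1,X_2]}E$ read off from (\ref{com_chn}), and collect coefficients. Since the only nonzero brackets are $[X,Y_i]=\frac12Y_i$, $[X,Z_i]=\frac12Z_i$, $[X,W]=W$ and $[Z_j,Y_i]=\delta_{ij}W$, the $\nabla_{[X_1,X_2]}$ contribution is cheap; the labour lies in the double covariant derivatives, where terms proportional to $N$ regenerate quantities such as $\nabla_{Y_i}N$ and $\nabla_WN$ that must be expanded back into the basis. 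Throughout I would use the curvature symmetries both to halve the work and as consistency checks: the skew-symmetry $R(X_1,X_2)=-R(X_2,X_1)$, the first Bianchi identity, and the pair symmetry $g(R(E_a,E_b)E_c,E_d)=g(R(E_c,E_d)E_a,E_b)$, which couples, for example, the $Z_i\wedge W$ part of $R(X,Y_i)$ to the $X\wedge Y_i$ part of $R(Z_i,W)$.

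The final step converts each endomorphism into the $2$-vector form of the statement via $(E_a\wedge E_b)(v)=g(E_b,v)E_a-g(E_a,v)E_b$. I would \emph{verify} the claimed expressions by acting on the basis rather than solving a linear system for the coefficients. The delicate point is the non-diagonal $\{X,Y_i\}$ block of $S(p,x,\sigma,\beta)$: the wedges $X\wedge Y_i$, $X\wedge W$ and $Y_l\wedge W$ act through the coupled inner products $g(X,X)=p$, $g(X,Y_i)=x_i$, $g(Y_i,Y_j)=\sigma_i\delta_{ij}$, so a single entry of the endomorphism can collect contributions from several wedge terms. I expect this bookkeeping — tracking the off-diagonal $p$ and $x_i$ couplings while matching coefficients — to be the main obstacle, more than any single derivative, whereas the $\{Z_i,W\}$ directions are $g$-orthogonal with norms $\sigma_i$ and $\beta$, making the identification there immediate. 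As an overall sanity check I would specialize to $x=0$, $\sigma_i=1$, $\beta$ fixed, where the formulas must reduce to the curvature of the standard constant holomorphic sectional curvature metric on $\Chn$, confirming the normalization.
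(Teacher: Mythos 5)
Your plan is correct and follows essentially the same route as the paper: compute the curvature endomorphisms from the listed covariant derivatives and the brackets, then identify them with $2$-vectors by matching how each candidate wedge acts on the basis through the non-diagonal inner product (the paper does this by solving for the unknown coefficients $\mu,\eta$ in an ansatz, e.g.\ for $R(Z_i,W)$, which is the same bookkeeping you describe in the opposite direction). Your observation that $N=X-\sum_k\frac{x_k}{\sigma_k}Y_k$ is $g$-orthogonal to $Y_i,Z_i,W$ with $g(N,N)=z$ is a helpful organizing remark but does not change the method.
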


\begin{proof}
For example, we will prove formula for $R(Zi, W)$:
$$
R(Z_i,W)X=\frac{\beta}{4z\sigma_i}\left(-x_iX+x_i\sum_l\frac{x_l}{\sigma_l}Y_l+zY_i)\right).$$
Only wedge product applied on $X$ which give us vector fields $X$ and $Y_i$ are $(X\wedge Y_i )X=x_iX-pY_i$ and $(Y_i\wedge Y_j )X=x_jY_i-x_iY_j$. Because of that $A(X)=\mu (X\wedge Y_i)X+\eta(Y_l\wedge Y_i)X$ must be the same with regular expression of Riemann curvature operator. So, we have the following equation to solve:
\begin{equation}
\label{koefbeta}
\frac{\beta}{4z\sigma_i}\left(-x_iX+x_i\sum_l\frac{x_l}{\sigma_l}Y_l+zY_i
\right)=\mu (x_iX-pY_i)+\eta(x_lY_i-x_iY_l).
\end{equation}
Now, the part with vector fields  $X$ gives us coefficient $\mu$:
$$\mu=-\frac{\beta}{4z\sigma_i}.$$
After replacing the coefficient $\mu$ and $p=\sum_l\frac{x_l^2}{\sigma_l}+z$, in equation (\ref{koefbeta}), we get the same part with vector field $Y_i$:
$$\frac{\beta}{4z\sigma_i}\left(x_i\sum_l\frac{x_l}{\sigma_l}Y_l+zY_i\right)=-\frac{\beta}{4z\sigma_i} (-\sum_l\frac{x_l^2}{\sigma_l}+z)Y_i+\eta(x_lY_i-x_iY_l).$$
Undoubtedly, that coefficient is $\eta=\frac{\beta}{4z\sigma_i}\sum_l\frac{x_l}{\sigma_l}$.  Therefore, after verification we get :
$$A=-\frac{\beta}{4z\sigma_i} (X\wedge Y_i)+\frac{\beta}{4z\sigma_i}\sum_l\frac{x_l}{\sigma_l}(Y_l\wedge Y_i).$$
It is necessary to check what we have from $A(Y_i), A(Z_i),  A(W)$ in Riemann curvature operators  $R(Z_i,W)Y_j, R(Z_i,W)Z_j, R(Z_i,W)W$ and by the analogous procedure, get the rest of the expression in terms of the wedge product. All the rest curvature operators are found by the similar lengthy calculations.
\end{proof}

Using the definition of the Ricci curvature tensor
$$
\label{eq:ro}
Ric(X_1,X_2)=Tr(X \mapsto R(X,X_1)X_2), \quad \forall X_1, X_2 \in \chn$$
and the scalar curvature
$$\tau=\sum g^{ij}r_{ij},$$ we calculate them directly.

\begin{Lemma}
For all left-invariant Riemannian metrics $g(p,x,\sigma,\beta)$
\begin{itemize}
\item[i)]
the Ricci curvature tensor in a basis with commutators (\ref{com_chn}) is

\footnotesize
\arraycolsep=0pt
\begin{equation}
\label{Ricci}
Ric=
{-}\frac{1}{2z}
\begin{pmatrix}
np{+}z & nx^T & 0 & 0\\
nx & n\sigma{+}\beta z \sigma^{-1} & 0 & 0\\
0 & 0 & n\sigma {+}\beta z \sigma^{-1}{+}\beta v v^T & (2n+1)\frac{\beta}{2} v\\
0 & 0 & (2n+1)\frac{\beta}{2} v^T  &
2n\beta{-}\beta^2\sum_{k=1}^{n-1}\frac{1}{\sigma_k^2}\left(\frac{x_k^2}{\sigma_k}{+}z\right)
\end{pmatrix},
\end{equation}
\normalsize
where\\
$x=(x_1, \dots, x_{n-1})^T, \quad
\sigma=\diag(\sigma_1, \dots, \sigma_{n-2},1), \quad \sigma_{n-1}=1$, \\
$v=\sigma^{-1} x=\left( \frac{x_i}{\sigma_i} \right), \quad
z=p-\sum_{i=1}^{n-1}\frac{x_i^2}{\sigma_i}.$

\item[ii)]
$\Cg$ has a constant negative scalar curvature:
$$\tau=-\frac{1}{2z}\left[2n^2+n+1+\beta\sum_{i=1}^{n-1} \frac{1}{\sigma_i^2} \left( z+\frac{x_i^2}{\sigma_i} \right) \right].$$
\end{itemize}
\end{Lemma}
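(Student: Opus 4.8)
The plan is to compute the Ricci tensor directly from its definition $\Ric(X_1,X_2)=\tr(X\mapsto R(X,X_1)X_2)$, feeding in the curvature operators already obtained in Lemma~\ref{klin}. The one point requiring care is that the basis $(X,Y_1,\dots,Y_{n-1},Z_1,\dots,Z_{n-1},W)$ is not orthonormal: the metric $S(p,x,\sigma,\beta)$ couples $X$ with the $Y_i$ through the entries $x_i$. Since the trace of a linear operator is basis-dependent only through the coefficients on the diagonal, I would read off $\tr A=\sum_c[AX_c]^c$, the sum of the coefficients of $X_c$ in $AX_c$, which is metric-free. Writing each operator from Lemma~\ref{klin} as a combination of elementary $2$-vectors and using $(X_i\wedge X_j)(X_k)=g(X_j,X_k)X_i-g(X_i,X_k)X_j$, I would evaluate $R(X_c,X_1)X_2$ for every basis vector $X_c$, extract the coefficient of $X_c$, and sum over $c$ to obtain one entry of (\ref{Ricci}). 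The metric values $g(X_q,X_2)$ enter at this stage; for $X_2=X$ or $X_2=Y_j$ they carry the off-diagonal data $g(X,Y_j)=x_j$, which is precisely what injects the $x$-dependence into the Ricci matrix.

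First I would establish the block structure, namely that $\Ric$ vanishes on every mixed pair between $\{X,Y_i\}$ and $\{Z_j,W\}$, i.e.\ on $(X,Z_j)$, $(X,W)$, $(Y_i,Z_j)$, $(Y_i,W)$. This follows by inspecting which elementary $2$-vectors occur in the relevant curvature operators: those producing the contributions to these entries are off-diagonal relative to the summation index, so their diagonal traces cancel. With the vanishing blocks removed, the surviving work is the $X$-$Y$ block (including the $Y_i$-$Y_j$ part), the $Z_i$-$Z_j$ block, the $Z_i$-$W$ coupling, and the $W$-$W$ entry. Here the scalar $z=p-\sum_i x_i^2/\sigma_i$ and the vector $v=\sigma^{-1}x$ appear naturally, since the repeated contractions of the operators in Lemma~\ref{klin} reproduce both the normalizing denominators $1/z$ and the combination $\sum_l x_l/\sigma_l$ recorded in $v$; the extra term $\beta vv^T$ distinguishing the $Z$-block from the $Y$-block comes from the $\beta$-weighted $Y_l\wedge Z_m$ contributions in $R(Y_i,Z_j)$ and $R(Z_i,W)$.

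For part (ii) I would use $\tau=\sum g^{ij}r_{ij}=\tr(S^{-1}\,\Ric)$. The key algebraic step is inverting $S(p,x,\sigma,\beta)$. Apart from the top-left block $\begin{pmatrix} p & x^T \\ x & \sigma\end{pmatrix}$ the matrix is block diagonal, so the inverse is governed by the Schur complement $p-x^T\sigma^{-1}x=z$: explicitly the top-left block of $S^{-1}$ equals $\tfrac1z\begin{pmatrix}1 & -v^T\\ -v & vv^T+z\sigma^{-1}\end{pmatrix}$, while the $Z$-block of $S^{-1}$ is $\sigma^{-1}$ and the $W$-entry is $1/\beta$. Multiplying by (\ref{Ricci}) and taking the trace, the $x$-dependent cross terms collapse against the $v$'s, leaving $\tau=-\tfrac{1}{2z}\bigl[2n^2+n+1+\beta\sum_{i=1}^{n-1}\tfrac{1}{\sigma_i^2}(z+\tfrac{x_i^2}{\sigma_i})\bigr]$. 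That $\tau<0$ is then immediate: positive definiteness of $S$ forces the Schur complement $z$ to be positive, and every summand inside the bracket is nonnegative, so the whole expression is strictly negative.

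The main obstacle is bookkeeping rather than anything conceptual: the operators in Lemma~\ref{klin} are long, and the non-orthonormality of the frame means one cannot simply sum sectional curvatures over an orthonormal basis. The two delicate points are, first, correctly accounting for $g(X,X)=p$ against $g(X,Y_i)=x_i$ when reading off diagonal coefficients, and second, verifying that the many $x_l/\sigma_l$ terms reassemble exactly into the entries involving $v$ and $vv^T$. I expect the $X$-$Y$ block in (i) and the trace computation in (ii)—both of which hinge on the reappearance of the Schur complement $z$—to demand the most care, while the vanishing of the mixed blocks should be the quickest part to dispatch.
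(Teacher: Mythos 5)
Your proposal is correct and follows essentially the same route as the paper, which simply states that the lemma ``follows directly'' from the Levi-Civita connection and the curvature operators of Lemma \ref{klin}; your plan of tracing those operators coefficient-by-coefficient and then computing $\tau=\tr(S^{-1}\,\Ric)$ via the Schur complement $z=p-x^T\sigma^{-1}x$ is exactly the direct computation the authors leave implicit. Your observation that positive definiteness of $S$ forces $z>0$, giving strict negativity of $\tau$, matches the paper's concluding remark.
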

\begin{proof}
This follows directly using calculation for Levi-Civita connection above and Riemann curvature operators from Lemma \ref{klin}. \\
Since $\sigma_i\geq1, \enskip \forall i\in\{1,\dots, n-1\}$, and $\beta, z>0$,
scalar curvature is strictly negative.
\end{proof}

\begin{Remark}
This is consistent with the Milnor's results on the scalar curvature of a non-flat left invariant Riemannian metric on solvable Lie groups (see \cite{Milnor}, Theorem 3.1).
\end{Remark}

\begin{Remark}
Ricci negative metrics on $\Cg$ exist. For example, if $x_i=0$ and $2n-p\beta\sum_{i=1}^{n-1}\frac{1}{\sigma_i^2}>0$, all eigenvalues are negative.\\
$$
Ric=-\frac{1}{2p}\diag\left((n+1)p, u , u, 2n\beta-\beta^2 p \sum_{k=1}^{n-1}\frac{1}{\sigma_k^2}\right),
$$
$u=\left( n \sigma_1+\frac{\beta p}{\sigma_1},\dots, n \sigma_{n-2}+\frac{\beta p}{\sigma_{n-2}}, n+\beta p  \right) \in \RR^{n-1} .$
\end{Remark}
\begin{Definition}
Metric is Einstein if the Ricci tensor is proportional to the metric tensor.
\end{Definition}
\begin{Theorem}
\label{einstein}
The left-invariant Riemannian metric $g(p,x,\sigma,\beta)$ on $\Cg$ is Einstein if and only if the following conditions are satisfied:
$$ p\beta=1, \quad (\forall  i) \;  x_i=0, \quad \sigma_i=1,$$
i.e.\ the corresponding inner product is
$$S=\diag\left( p, 1, \dots, 1, \frac{1}{p}\right).$$
\end{Theorem}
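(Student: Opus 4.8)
The plan is to impose the Einstein condition $\Ric = c\,S$ for a single real constant $c$ directly on the explicit matrix expression (\ref{Ricci}) for the Ricci tensor, and to read off the resulting constraints block by block. The argument exploits the rigid block pattern of the normal form $S(p,x,\sigma,\beta)$ in (\ref{eq:Schn}): the $Z$-block and the $W$-direction are mutually orthogonal and both orthogonal to the coupled $X$-$Y$ block, whereas (\ref{Ricci}) contains a nontrivial $Z$-$W$ coupling $-\frac{1}{2z}(2n+1)\frac{\beta}{2}v$ with $v=\sigma^{-1}x$.

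The decisive first step is to compare that off-diagonal $Z$-$W$ block. In $S$ it vanishes identically, so $\Ric=c\,S$ forces $-\frac{1}{2z}(2n+1)\frac{\beta}{2}v=0$; since $\beta>0$ this gives $v=0$, and because $\sigma$ is invertible this is equivalent to $x=0$. This single obstruction collapses most of the complexity at once: it yields $z=p$, decouples the $X$-$Y$ block, and annihilates the rank-one term $\beta\,vv^T$ in the $Z$-block. With $x=0$ and $z=p$ I would then fix $c$ from the $X$-$X$ entry, namely $c=-\frac{1}{2p}(np+p)/p=-\frac{n+1}{2p}$.

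Next I would treat the $Y$-$Y$ block. The equation $\Ric_{YY}=c\,S_{YY}$ reads $n\sigma+\beta p\,\sigma^{-1}=(n+1)\sigma$, i.e.\ $\sigma^2=\beta p\,I_{n-1}$ as diagonal matrices. Because the normalization $\sigma_{n-1}=1$ is built into (\ref{eq:Schn}), the last diagonal entry immediately gives $\beta p=1$, and then $\sigma_i^2=1$ together with $\sigma_i\ge 1$ forces $\sigma_i=1$ for every $i$. The $Z$-$Z$ block produces exactly the same equation (its extra $\beta\,vv^T$ term is already gone) and contributes nothing new.

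The one genuinely non-automatic point, where an overdetermined system could in principle be inconsistent, is the $W$-$W$ entry. Here the hard part is only to verify compatibility rather than to derive a new condition: substituting $x=0$, $\sigma_i=1$, $z=p$ and $\beta p=1$ into the $W$-$W$ entry of (\ref{Ricci}) and into $c\,S_{WW}=c\beta$, I expect both sides to coincide, the matching reducing after clearing $\beta$ and $2p$ to the identity $2n-(n-1)=n+1$. This shows that the necessary conditions $x=0$, $\sigma_i=1$, $\beta p=1$ impose no further restriction, so the resulting inner product is $S=\diag(p,1,\dots,1,1/p)$, as claimed. The converse direction is then immediate: reversing these substitutions in (\ref{Ricci}) gives $\Ric=-\frac{n+1}{2p}\,S$, so the metric is indeed Einstein.
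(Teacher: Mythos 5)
Your proof is correct, and it supplies the direct verification that the paper itself leaves implicit: Theorem \ref{einstein} is stated without a proof environment, and the only argument actually written out is the ``alternative proof'' in Section 4.1 via Lauret's solsoliton construction. Your block-by-block comparison of $\Ric=c\,S$ with the explicit matrix (\ref{Ricci}) is sound: the $Z$--$W$ coupling $-\tfrac{(2n+1)\beta}{4z}v$ is indeed the decisive obstruction (the $X$--$Y$ block alone would only fix $c=-n/(2z)$ without killing $x$), since $\beta,z>0$ force $v=0$ and hence $x=0$; the $X$--$X$ entry then gives $c=-\tfrac{n+1}{2p}$; the $Y$--$Y$ block yields $\sigma^2=\beta p\,I$, which together with the normalization $\sigma_{n-1}=1$ and $\sigma_i\ge 1$ gives $\beta p=1$ and $\sigma_i=1$; and the $W$--$W$ entry reduces, as you say, to the identity $2n-(n-1)=n+1$, confirming consistency. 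The converse is the same substitution run backwards.

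The contrast with the paper's written argument is worth noting. The paper derives the same conditions ($x_i=0$, $z=p$, $p\beta=1$, $\sigma_i=1$) by viewing $\chn=\aa\oplus\hh$ as a one-dimensional abelian extension of the Heisenberg nilsoliton and invoking Lauret's Proposition 4.3: the normalization $\langle e,e\rangle=-\tfrac{1}{c}\tr(\ad e)^2$ produces $\beta z=1$, and $\Ric=cI+D$ with $D_1\in\aa$ forces the rest. That route explains \emph{why} the Einstein metric is unique up to automorphisms and scaling (it is the unique solsoliton extension of the unique Heisenberg nilsoliton) and places the result in the general structure theory of solvmanifolds, but it presupposes the classification of nilsolitons on $H^{2n-1}$. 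Your argument is more elementary and self-contained --- it uses only the already-computed Ricci tensor and the rigidity of the normal form $S(p,x,\sigma,\beta)$ --- at the cost of offering no structural explanation for the uniqueness. Both are valid; yours is the natural ``primary'' proof that the paper's phrase ``alternative proof'' presupposes.
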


\begin{Remark}
The Einstein metric on solvable Lie group $\Cg$ is unique up to the action of the automorphism group and scaling, which is consistent with the Lauret's results on solsolitons \cite{Lauretsolsoliton}, since the Einstein metric is the trivial example of solsoliton. Of course, this is the standard K\"ahler metric of the complex hyperbolic space $\Chn$.
\end{Remark}

\subsection{Extension of Ricci solitons on Heinsenberg group to Einstein metric on $\Cg$}

Lauret showed the general way how to obtain solsolitons by abelian extension of nilsolitons.
Since nilsolitons on Heisenberg are classified (both Riemann and Lorentz case, see \cite{Nasehi}),
and $\Cg$ is one-dimensional  abelian extension of Heisenberg group,
it is interesting as an example to apply the  Lauret's construction \cite{Lauretsolsoliton} to see how the extension of nilsoliton of Heisenberg gives rise to Einstein metric on $\Cg$.

\begin{Definition}[\cite{Lauretsolsoliton}]
A left-invariant metric $g$ on a simply connected solvable (nilpotent) Lie group is called a solsoliton (nilsoliton) if the corresponding Ricci operator satisfies $$Ric(g)=cI + D,$$ for some $c\in \RR$, $D \in Der(\mathfrak{g})$ and $Ric(g)$ denotes the Ricci operator of $g$.
\end{Definition}

\begin{customthm}{4.3}[\cite{Lauretsolsoliton}]
Let $(\nn,\langle\cdot,\cdot\rangle_1)$ be a nilsoliton, say with Ricci operator $\Ric_1=cI+D_1$,  $c<0$, $D_1\in \Der(\nn)$ and consider $\aa$ any abelian Lie algebra of symmetric derivations of $(\nn,\langle\cdot,\cdot\rangle_1)$. Then the solvmanifold $S$ with Lie algebra $\ss=\aa\oplus \nn$ (semidirect product) and inner product given by
$$\langle\cdot,\cdot\rangle|_{n \times n}=\langle\cdot,\cdot\rangle_1, \qquad \langle \aa, \nn\rangle=0,
\qquad \langle A, A \rangle= -\frac{1}{c}tr A^2, \qquad \forall A \in \aa,$$
is a solsoliton with $\Ric=cI+D$, where $D\in \Der(\ss)$ is defined by $D|_\aa=0$, $D|_\nn=D_1-\ad H|_n$ and $H$ is the mean curvature vector of $S$. Futhermore, $S$ is Einstein if and only if $D_1\in \aa$.
\end{customthm}

In order to apply the previous Proposition to $\chn =\RR \ltimes \hh$, we consider it as an orthogonal sum  $\aa\oplus \nn$. Thus, we switch from the standard base\\
$(X,Y_1,\dots,Y_{n-1}, Z_1,\dots,Z_{n-1},W)$ to the orthonormal one\\
 $(e,f_1,\dots,f_{n-1}, g_1,\dots,g_{n-1},w)$,
$$
e=\frac{V}{\Vert V\Vert}, \quad f_{i}=\frac{Y_i}{\sqrt{\sigma_i}}, \quad g_{i}=\frac{Z_j}{\sqrt{\sigma_j}}, \quad w=\frac{W}{\sqrt{\beta}},
$$
where
\begin{eqnarray*}
V&=& X-\sum g(X,Y_i)\frac{Y_i}{\Vert Y_i\Vert}-\sum g(X,Z_i)\frac{Z_i}{\Vert Z_i\Vert}- g(X,W)\frac{W}{\Vert W\Vert}\\
  &=& X- \sum \frac{x_i}{\sqrt{\sigma_i}}Y_i,\\
  g(V,V)&=&p-2 \sum\frac{x_i^2}{\sqrt{\sigma_i}}+\sum{x_i^2}
\end{eqnarray*}

In our case nilsoliton is Heisenberg $\nn=\hh$ and the abelian part is one dimensional spanned by the vector $e$. From the Theorem 3.4. in \cite{Nasehi}, applied to dimension $2n-1$,
metric $\langle\cdot,\cdot\rangle_1$ on Heisenberg is given by $\diag(1,\dots, 1, \beta)$, i.e.\ all $\sigma_i=1$,
$D_1=n \beta \diag(\frac{1}{2},\dots \frac{1}{2}, 1)$,
$\Ric_1=-\frac{\beta}{2}\diag(\frac{1}{2},\dots, \frac{1}{2}, 1-n), c=-\frac{\beta}{2}({n+1})$,
$V= X- \sum x_iY_i$,
$g(V,V)=p-\sum{x_i^2}=z$. Mean curvature vector of solvmanifold $S$ is the only element $H \in \aa$ such that $\langle H, A\rangle =\tr \, \ad A$ for any $A \in \aa$. Direct calculation gives us that
 $H=\frac{n}{z}e$, \\

$$\ad H=\frac{n}{z}
\begin{pmatrix}
0 & 0 & 0 & 0\\
0 & d_\frac{1}{2} & 0 & 0\\
0 & 0 & d_\frac{1}{2} & 0\\
0 & 0 & x\sqrt{\beta} & 1
\end{pmatrix},
$$
where
$d_\frac{1}{2}=\diag(\frac{1}{2}, \dots, \frac{1}{2})$, $x=(x_1, \dots, x_{n-1})$ and

$$
D|_\nn=D_1-\ad H|_n=\frac{n}{z}
\begin{pmatrix}
\frac{\beta z -1}{2} & 0 & 0\\
0 & \frac{\beta z - 1}{2} & 0\\
0 & -x\sqrt{\beta} & \beta z - 1
\end{pmatrix}.
$$
If we identify
$\aa=\RR\langle e\rangle \cong \RR\langle \ad e\rangle$, and calculate
$$\ad e=\frac{1}{\sqrt{z}}
\begin{pmatrix}
0 & 0 & 0 & 0\\
0 & d_\frac{1}{2} & 0 & 0\\
0 & 0 & d_\frac{1}{2} & 0\\
0 & 0 & x\sqrt{\beta} & 1
\end{pmatrix},
\enskip
(\ad e)^2=\frac{1}{z}
\begin{pmatrix}
0 & 0 & 0 & 0\\
0 & d_\frac{1}{4} & 0 & 0\\
0 & 0 & d_\frac{1}{4} & 0\\
0 & 0 & \frac {3}{2}x\sqrt{\beta} & 1
\end{pmatrix},
$$
$$
\tr\,\ad e^2=\frac{n+1}{2z},$$
then, from the condition $\langle A, A \rangle= -\frac{1}{c}tr A^2, \enskip \forall A \in \aa,$ we have
$$1=\langle e, e \rangle= -\frac{1}{c}\tr\,\ad e^2=\beta z.$$
Using the condition $\Ric=cI+D$, already calculated Ricci tensor (\ref{Ricci}) on $\chn$, and the results above, now
we get that all $x_i=0$, $D=0$, $z=p$, so $p\beta=1$.
Therefore, there is only one Ricci soliton on $\chn$  up to homothety and action of automorphism group. That is exactly the Einstein metric from the classification $\diag(\frac{1}{\beta},1,\dots,\beta)$, which is an alternative proof of the Theorem \ref{einstein}.

\begin{Remark}
Since we know that Einstein metric from the classification is the simplest Ricci soliton,
this result is consistent with the Corollary 4.10 and Theorem 5.1 from \cite{Lauretsolsoliton}, saying that up to isometry, all solsolitons can be constructed as in Proposition 4.3.
\end{Remark}

\textbf{Acknowledgments}:
This research is partially supported
by Ministry of Education, Science and Technological Development of Republic
of Serbia,
within the project 174012,
through Faculty of Mathematics, University of Belgrade and
Mathematical Institute of the Serbian Academy of Sciences and Arts.

\appendix
\section{Appendix: Explicit expression of Riemann curvature operator}

\begin{eqnarray*}
R(X,Y_i)X&&=\frac{1}{4z}( -x_iX+pY_i) \\
R(X,Y_i)Y_j&&=\frac{1}{4z}(x_jY_i-\delta_{ij}\sigma_iX)\\
R(X,Y_i)Z_j&&=\frac{\delta_{ij}}{4}W\\
R(X,Y_i)W&&=-\frac{\beta}{4 \sigma_i}Z_i\\
\\
R(X, Z_i)X&&=\frac{p}{4z}\left(Z_i+\frac{x_i}{\sigma_i}W\right)\\
R(X, Z_i)Y_j&&=\frac{x_i}{4z}\left(Z_i+\frac{x_i}{\sigma_i}W-\delta_{ij}\frac{z}{x_j}W\right)\\
R(X, Z_i)Z_j&&=-\frac{\sigma_i}{4z}\delta_{ij}X\\
R(X, Z_i)W&&=\frac{\beta}{4z\sigma_i}\left(-2x_iX+x_i\sum_l\frac{x_l}{\sigma_l}Y_l+zY_i\right)\\
\\
R(X,W)X&&=\frac{p\beta}{4z}\sum_l\frac{x_l}{\sigma_l^2}Z_l+ \frac{z+p}{2z}W\\
R(X,W)Y_i&&=\frac{x_i\beta}{4z}\sum_l\frac{x_l}{\sigma_l^2}Z_l-\frac{\beta}{2\sigma_i}Z_i+\frac{x_i}{2z}W\\
R(X,W)Z_i&&=\frac{\beta}{4z\sigma_i}\left(-3x_iX+2x_i\sum_l\frac{x_l}{\sigma_l}Y_l+2zY_i
\right)\\
R(X,W)W&&=\frac{\beta}{z}\left(-X+\frac{1}{2}\sum_l\frac{x_l}{\sigma_l}Y_l\right)\\
\\
R(Y_i,Y_j)X&&=\frac{1}{4z}(x_iY_j-x_jY_i)\\
R(Y_i,Y_j)Y_k&&=\frac{1}{4z}(\delta_{ik}\sigma_i Y_j-\delta_{jk}\sigma_j Y_i)\\
R(Y_i,Y_j)Z_k&&=\frac{\beta}{4}\left(\frac{\delta_{ik}}{\sigma_j}Z_j-\frac{\delta_{jk}}{\sigma_i}Z_i\right)\\
R(Y_i,Y_j)W&&=0\\
\\
\end{eqnarray*}

\begin{eqnarray*}
R(Y_i,Z_j)X&&=\frac{1}{4z}\left[x_iZ_j+\left(\frac{x_ix_j}{\sigma_j}-2\delta_{ij}z\right)W \right]\\
R(Y_i,Z_j)Y_k&&=\frac{1}{4}
\left( \delta_{jk}\frac{\beta}{\sigma_i}Z_i
+\delta_{ik}\frac{ \sigma_i}{z}Z_j
+2\,\delta_{ij}\frac{\beta}{\sigma_k}Z_k
+\delta_{ik}\frac{x_j \sigma_i}{z\sigma_j}W
\right)\\
R(Y_i,Z_j)Z_k&&=\frac{1}{4z}\left[
2\,\delta_{ij}\frac{\beta}{\sigma_k}\left(x_kX-zY_k-x_k\sum_l\frac{x_l}{\sigma_l}Y_l\right)
+\delta_{ik}\frac{\beta}{\sigma_j}\left(x_jX-zY_j-x_j\sum_l\frac{x_l}{\sigma_l}Y_l\right)
-\delta_{jk}\sigma_jY_i
\right]\\
R(Y_i,Z_j)W&&=\frac{\beta}{4z}\left[-\frac{x_j}{\sigma_j}Y_i+2\,\delta_{ij}(X-\sum_l\frac{x_l}{\sigma_l}Y_l)
\right]\\
R(Y_i,W)X&&=\frac{1}{4z}\left(\beta x_i\sum_l\frac{x_l}{\sigma_l^2}Z_l-\frac{\beta z}{\sigma_i}Z_i+2\,x_iW
\right)
\\
R(Y_i,W)Y_j&&=\frac{\beta}{4z}\delta_{ij}\left[\sigma_i\sum_l\frac{x_l}{\sigma_l^2}Z_l+(2\,\frac{\sigma_i}{\beta}-\frac{z}{\sigma_i})W
\right]\\
R(Y_i,W)Z_j&&=\frac{\beta}{4z}\left[\delta_{ij}(X-\sum_l\frac{x_l}{\sigma_l}Y_l)-\frac{x_j}{\sigma_j}Y_i
\right]\\
R(Y_i,W)W&&=\frac{\beta^2}{4z\sigma_i^2}\left[-x_iX+x_i\sum_l\frac{x_l}{\sigma_l}Y_l+(z-2\frac{\sigma_i^2}{\beta})Y_i
\right]\\
\\
R(Z_i,Z_j)X&&=0
\\
R(Z_i,Z_j)Y_k&&=\frac{\beta}{4z}\left[
\frac{\delta_{ik}}{\sigma_j}(-x_jX+x_j\sum_l\frac{x_l}{\sigma_l}Y_l+zY_j)
-\frac{\delta_{jk}}{\sigma_i}(-x_iX+x_i\sum_l\frac{x_l}{\sigma_l}Y_l+zY_i)
\right]
\\
R(Z_i,Z_j)Z_k&&=\frac{1}{4z}\left[
\delta_{ik}\sigma_i(Z_j+\frac{x_j}{\sigma_j}W)
-\delta_{jk}\sigma_j(Z_i+\frac{x_i}{\sigma_i}W)
\right]\\
R(Z_i,Z_j)W&&=\frac{\beta}{4z}\left(\frac{x_i}{\sigma_i}Z_j-\frac{x_j}{\sigma_j}Z_i
\right)\\
\\
R(Z_i,W)X&&=\frac{\beta}{4z\sigma_i}\left(-x_iX+x_i\sum_l\frac{x_l}{\sigma_l}Y_l+zY_i)
\right)\\
R(Z_i,W)Y_j&&=\frac{\beta}{4z}\delta_{ij}\left(-X+\sum_l\frac{x_l}{\sigma_l}Y_l
\right)\\
R(Z_i,W)Z_j&&=\frac{\beta}{4z}\left[
-\frac{x_j}{\sigma_j}Z_i+\delta_{ij}\sigma_i\sum_l\frac{x_l}{\sigma_l}^2Z_l
+\left(\delta_{ij}\left(2\frac{\sigma_i}{\beta}-\frac{z}{\sigma_j}\right )-\frac{x_ix_j}{\sigma_i\sigma_j}
\right)W
\right]\\
R(Z_i,W)W&&=\frac{\beta^2}{4z}\left[\left (\frac{z}{\sigma_i^2}-\frac{2}{\beta}\right)Z_i
+ \frac{x_i}{\sigma_i}\sum_l\frac{x_l}{\sigma_l^2}Z_l
\right]\\
\end{eqnarray*}

\end{document}